\numberwithin{equation}{section}
\numberwithin{figure}{section}
\theoremstyle{plain}
\newtheorem{thm}{Theorem}
  \theoremstyle{plain}
  \newtheorem{lem}[thm]{Lemma}
\begin{document}

\title{On Asymptotics Of $\Gamma_{q}(z)$ As $q$ Approaching $1$}

\author{Ruiming Zhang}
\begin{abstract}
In this note we give a derivation of the asymptotic formula for the
$q$-Gamma function as $q$ approaching $1$. This formula is valid
on all the complex plan except at the poles of the Euler Gamma function.
\end{abstract}

\subjclass[2000]{Primary 33D05; Secondary 33D15. }

\curraddr{College of Science\\
Northwest A\&F University\\
Yangling, Shaanxi 712100\\
P. R. China.}

\keywords{\noindent Gamma Function $\Gamma(z)$; $q$-Gamma Function $\Gamma_{q}(z)$;
Asymptotics.}

\email{ruimingzhang@yahoo.com}

\maketitle

\section{Introduction}

Recall that the $q$-Gamma function is defined as \cite{Andrews,Gasper,Ismail}\begin{align*}
\Gamma_{q}(z) & =\frac{(q;q)_{\infty}}{(1-q)^{z-1}(q^{z};q)_{\infty}},\end{align*}
where\begin{align*}
\left(a;q\right)_{\infty} & =\prod_{k=0}^{\infty}\left(1-aq^{k}\right),\quad a\in\mathbb{C},\ q\in\left(0,1\right).\end{align*}
$\Gamma_{q}(z)$ is fundamental to the theory of basic hypergeometric
series. It is known that \cite{Andrews,Gasper,Ismail} \begin{align*}
\lim_{q\to1}\Gamma_{q}(z) & =\Gamma(z),\end{align*}
where $\Gamma(z)$ is the Euler Gamma function. This fact is the link
between the basic hypergeometric series and the classical hypergeometric
series. All the standard textbooks on $q$-series presented W. Gosper's
heuristic argument. A rigorous version of Gosper's argument and an
alternative proof were later found by T. Koornwinder, \cite{Andrews,Koornwinder1},
but these proofs failed to indicate the speed of convergence. In \cite{Zhang}
we presented a proof by using a $q$-Beta integral. In this note we
will give yet another asymptotic formula valid on the entire complex
plane except at poles of $\Gamma(z)$. Our proof only uses calculus
for the $\Re(z)>0$ , then apply the transformation formula of $\theta_{1}(z;q)$
to get the case for $\Re(z)<1$.

\section{Main Results}

For the sake of completeness we give a proof for the following Lemma,
also see  \cite{Kirillov,Koornwinder2}.
\begin{lem}
Let\begin{align*}
\left|z\right| & <1,\ 0<q<1,\end{align*}
then\begin{align}
\left(z;q\right)_{\infty} & =\exp\left\{ -\sum_{k=1}^{\infty}\frac{1}{1-q^{k}}\frac{z^{k}}{k}\right\} .\label{eq:2.1}\end{align}
\end{lem}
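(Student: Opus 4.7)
The plan is to take the principal logarithm of the infinite product and expand each factor as a power series, then justify interchanging the two sums by absolute convergence.

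First I would write
\begin{align*}
\log(z;q)_{\infty} &= \sum_{k=0}^{\infty}\log(1-zq^{k}),
\end{align*}
which makes sense because for $|z|<1$ and $0<q<1$ we have $|zq^{k}|\le|z|<1$ for every $k\ge 0$, so each logarithm is well defined by its principal branch and the factors $1-zq^{k}$ tend to $1$ fast enough that the product converges absolutely (so the logarithm of the product equals the sum of the logarithms). Next I would substitute the Taylor expansion
\begin{align*}
\log(1-w) &= -\sum_{j=1}^{\infty}\frac{w^{j}}{j},\qquad |w|<1,
\end{align*}
with $w=zq^{k}$, obtaining a double sum
\begin{align*}
\log(z;q)_{\infty} &= -\sum_{k=0}^{\infty}\sum_{j=1}^{\infty}\frac{z^{j}q^{kj}}{j}.
\end{align*}

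The key technical step is justifying the interchange of the two summations. I would do this via Fubini/Tonelli for series: the terms are dominated in absolute value by $|z|^{j}q^{kj}/j$, and
\begin{align*}
\sum_{k=0}^{\infty}\sum_{j=1}^{\infty}\frac{|z|^{j}q^{kj}}{j} &= \sum_{j=1}^{\infty}\frac{|z|^{j}}{j(1-q^{j})} \le \frac{1}{1-q}\sum_{j=1}^{\infty}\frac{|z|^{j}}{j} = \frac{-\log(1-|z|)}{1-q}<\infty,
\end{align*}
which is finite precisely because $|z|<1$. Hence the double series converges absolutely and the two summations may be swapped.

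After swapping I would evaluate the inner geometric series $\sum_{k=0}^{\infty}q^{kj} = 1/(1-q^{j})$ to obtain
\begin{align*}
\log(z;q)_{\infty} &= -\sum_{j=1}^{\infty}\frac{1}{1-q^{j}}\frac{z^{j}}{j},
\end{align*}
and exponentiating both sides yields \eqref{eq:2.1}. The main (and essentially only) obstacle is the absolute convergence estimate used to justify the interchange; everything else is a direct calculation. No deeper input such as the theta transformation is needed for this lemma.
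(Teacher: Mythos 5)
Your proposal is correct and follows essentially the same route as the paper: take the principal logarithm of the product, expand each factor via $\log(1-w)=-\sum_{j\ge1}w^{j}/j$, interchange the two sums, and sum the geometric series. The only difference is that you explicitly justify the interchange by the absolute-convergence bound, a step the paper performs without comment.
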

\begin{proof}
From\begin{align*}
\log(1-z) & =-\sum_{k=1}^{\infty}\frac{z^{k}}{k},\quad\left|z\right|<1\end{align*}
to get

\begin{eqnarray*}
\log\left(z,q\right)_{\infty} & = & \sum_{j=0}^{\infty}\log\left(1-zq^{j}\right)=-\sum_{j=0}^{\infty}\sum_{k=1}^{\infty}\frac{\left(zq^{j}\right)^{k}}{k}\\
 & = & -\sum_{k=1}^{\infty}\frac{z^{k}}{k}\sum_{j=0}^{\infty}q^{jk}=-\sum_{k=1}^{\infty}\frac{z^{k}}{k\left(1-q^{k}\right)}\end{eqnarray*}
for $q\in(0,1)$, where all the logarithms are taken as their principle
branches. \eqref{eq:2.1} is obtained by taking exponentials. \end{proof}
\begin{lem}
Let \begin{align*}
q & =e^{-\pi\tau},\ \tau>0,\quad\Re(w)>0,\end{align*}
then,\begin{align}
\left(q^{w+1};q\right)_{\infty} & =\frac{\sqrt{2\pi}w^{w-1/2}\exp\left(-\frac{\pi}{6\tau}\right)}{\Gamma\left(w\right)\left(1-e^{-\tau\pi w}\right)^{w+1/2}}\left\{ 1+\mathcal{O}\left(\tau\right)\right\} ,\label{eq:2.2}\end{align}
as $\tau\to0^{+}$.\end{lem}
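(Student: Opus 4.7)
The plan is to take logarithms on both sides of \eqref{eq:2.2} and verify the asymptotic identity directly. Applying the previous Lemma with $z=q^{w+1}$ gives
\begin{align*}
\log(q^{w+1};q)_{\infty} & = -\sum_{k=1}^{\infty}\frac{e^{-\pi\tau kw}}{k(e^{\pi\tau k}-1)},
\end{align*}
while taking the logarithm of the right-hand side of \eqref{eq:2.2}, using $\log\Gamma(w+1)=\log w+\log\Gamma(w)$ together with $1-e^{-\pi\tau w}=\pi\tau w(1+\mathcal{O}(\tau))$, reduces the claim to showing
\begin{align*}
\sum_{k=1}^{\infty}\frac{e^{-\pi\tau kw}}{k(e^{\pi\tau k}-1)} & = \frac{\pi}{6\tau}+\left(w+\frac{1}{2}\right)\log(\pi\tau)+\log\Gamma(w+1)-\frac{1}{2}\log(2\pi)+\mathcal{O}(\tau)
\end{align*}
as $\tau\to 0^+$.

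I would establish this via a Mellin--Barnes representation. The classical identity $\int_{0}^{\infty}u^{s-1}e^{-wu}(e^{u}-1)^{-1}\,du=\Gamma(s)\zeta(s,w+1)$, valid for $\Re s>1$ and $\Re w>-1$, combined with $\sum_{k\geq1}k^{-s-1}=\zeta(s+1)$, yields for any $c>1$
\begin{align*}
\sum_{k=1}^{\infty}\frac{e^{-\pi\tau kw}}{k(e^{\pi\tau k}-1)} & = \frac{1}{2\pi i}\int_{c-i\infty}^{c+i\infty}\Gamma(s)\zeta(s+1)\zeta(s,w+1)(\pi\tau)^{-s}\,ds.
\end{align*}
The small-$\tau$ asymptotics are governed by the rightmost poles. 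Shifting the contour to $\Re s=-1-\delta$ for small $\delta>0$ picks up a simple pole at $s=1$ (from $\zeta(s,w+1)$), a double pole at $s=0$ (from the coincident simple poles of $\Gamma(s)$ and $\zeta(s+1)$), and a simple pole at $s=-1$ (from $\Gamma(s)$), while the shifted integral is $\mathcal{O}(\tau^{1+\delta})$ by standard vertical-strip bounds on $\Gamma$ and $\zeta$; the $s=-1$ residue is itself of order $\tau$ and is absorbed into the $\mathcal{O}(\tau)$ error.

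The two main contributions are then straightforward: the $s=1$ residue is $\zeta(2)(\pi\tau)^{-1}=\pi/(6\tau)$, producing the exponential factor. For the double pole at $s=0$, I would expand $\Gamma(s)\zeta(s+1)=s^{-2}+\mathcal{O}(1)$ -- the $s^{-1}$ coefficient vanishes because the Euler--Mascheroni constants from the two factors cancel -- and combine this with $\zeta(0,w+1)=-(w+\tfrac{1}{2})$ and Lerch's formula $\zeta'(0,w+1)=\log\Gamma(w+1)-\tfrac{1}{2}\log(2\pi)$; reading off the coefficient of $s^{-1}$ in the full product gives the residue
\begin{align*}
\zeta'(0,w+1)-\zeta(0,w+1)\log(\pi\tau) & = \log\Gamma(w+1)-\frac{1}{2}\log(2\pi)+\left(w+\frac{1}{2}\right)\log(\pi\tau),
\end{align*}
matching exactly the required form.

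The main technical obstacle is the bookkeeping of the double-pole residue -- four Laurent expansions must be multiplied to first order in $s$ and the coefficient of $s^{-1}$ extracted carefully -- together with the uniform vertical-strip estimates on $\zeta(s+1)\zeta(s,w+1)$ needed to justify the contour shift for $w$ in compact subsets of $\{\Re w>0\}$; both are standard but require attention. Once the residues are collected, reassembling and exponentiating yields \eqref{eq:2.2}.
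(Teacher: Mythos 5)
Your proposal is correct, but it follows a genuinely different route from the paper. You reduce \eqref{eq:2.2} to the asymptotics of $\sum_{k\ge1}e^{-\pi\tau kw}/\bigl(k(e^{\pi\tau k}-1)\bigr)$ and evaluate that sum by a Mellin--Barnes integral $\frac{1}{2\pi i}\int\Gamma(s)\zeta(s+1)\zeta(s,w+1)(\pi\tau)^{-s}\,ds$, collecting the residue $\pi/(6\tau)$ at $s=1$ and the double-pole residue $\zeta'(0,w+1)-\zeta(0,w+1)\log(\pi\tau)$ at $s=0$ via Lerch's formula; I have checked both the reduction (including the step $1-e^{-\pi\tau w}=\pi\tau w(1+\mathcal{O}(\tau))$, which converts the paper's $(1-e^{-\pi\tau w})^{w+1/2}$ into your $(w+\tfrac12)\log(\pi\tau)+\log w$ terms) and the residue bookkeeping (the cancellation of the Euler--Mascheroni constants in $\Gamma(s)\zeta(s+1)$ is right), and everything matches. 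The paper instead works entirely with elementary analysis: it subtracts from $1/(e^{k\pi\tau}-1)$ the truncated Laurent expansion $\frac{1}{k\pi\tau}-\frac12+\frac{k\pi\tau}{12}$, so that the explicit part of the sum becomes $\frac{1}{\pi\tau}\mathrm{Li}_{2}(e^{-\pi\tau w})$ plus elementary terms (handled by the dilogarithm functional equation), while the remainder $S$ is recognized as a Riemann sum for Binet's integral representation of $\log\Gamma(w)$, with $S-I=\mathcal{O}(\tau)$ proved by a first-derivative estimate on each subinterval. Your approach buys generality and a clear mechanism for the full asymptotic expansion (pushing the contour past $s=-1,-2,\dots$ yields the Bernoulli-number corrections automatically), but it requires the analytic continuation, special values, and vertical-strip growth estimates of the Hurwitz zeta function with a complex second parameter; the paper's approach is more elementary, quoting only Binet's formula and the dilogarithm identity, in line with its stated aim of using ``only calculus'' for $\Re(w)>0$. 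The one point you should make explicit if you carry this out is the uniformity in $w$ (on compact subsets of $\Re(w)>0$) of the contour-shift bounds and of the $\mathcal{O}(\tau)$ in $1-e^{-\pi\tau w}=\pi\tau w(1+\mathcal{O}(\tau))$, which you already flag.
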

\begin{proof}
Take $z=qe^{-\tau\pi w}$ in \eqref{eq:2.1} with $\Re(w)>0$ to obtain\[
\left(qe^{-\tau\pi w},q\right)_{\infty}=\exp\left\{ -\sum_{k=1}^{\infty}\frac{q^{k}e^{-k\tau\pi w}}{k\left(1-q^{k}\right)}\right\} \]
and \begin{align*}
\sum_{k=1}^{\infty}\frac{q^{k}e^{-k\tau\pi w}}{k\left(1-q^{k}\right)} & =\sum_{k=1}^{\infty}\frac{e^{-k\tau\pi w}}{k}\left\{ \frac{q^{k}}{1-q^{k}}-\frac{1}{k\pi\tau}+\frac{1}{2}-\frac{k\pi\tau}{12}\right\} \\
 & +\sum_{k=1}^{\infty}\frac{e^{-k\tau\pi w}}{k}\left\{ \frac{1}{k\pi\tau}-\frac{1}{2}+\frac{k\pi\tau}{12}\right\} \\
 & =S+\frac{1}{\pi\tau}\sum_{k=1}^{\infty}\frac{e^{-k\tau\pi w}}{k^{2}}-\frac{1}{2}\sum_{k=1}^{\infty}\frac{e^{-k\tau\pi w}}{k}+\frac{\pi\tau}{12}\sum_{k=1}^{\infty}e^{-k\tau\pi w}\\
 & =S+\frac{1}{\pi\tau}\mbox{Li}_{2}\left(\exp(-\pi\tau w)\right)+\frac{1}{2}\log\left(1-e^{-\tau\pi w}\right)+\frac{\pi\tau}{12\left(\exp(\tau\pi w)-1\right)},\end{align*}
where\[
S=\sum_{k=1}^{\infty}\frac{e^{-k\tau\pi w}}{k}\left\{ \frac{1}{e^{k\pi\tau}-1}-\frac{1}{k\pi\tau}+\frac{1}{2}-\frac{k\pi\tau}{12}\right\} \]
and \[
\mbox{Li}_{2}\left(z\right)=\sum_{k=1}^{\infty}\frac{z^{n}}{n^{2}},\quad\left|z\right|\le1.\]
From \cite{Andrews} \begin{align*}
\mbox{Li}_{2}\left(z\right) & =-\mbox{Li}_{2}\left(1-z\right)+\frac{\pi^{2}}{6}-\log z\cdot\log\left(1-z\right)\end{align*}
to get\begin{align*}
\mbox{Li}_{2}\left(\exp(-\pi\tau w)\right) & =-\mbox{Li}_{2}\left(1-\exp(-\pi\tau w)\right)+\frac{\pi^{2}}{6}+\pi\tau w\log\left(1-\exp(-\pi\tau w)\right)\\
 & =-\pi\tau w+\frac{\pi^{2}}{6}+\pi\tau w\log\left(1-\exp(-\pi\tau w)\right)+\mathcal{O}\left(\tau^{2}\right),\end{align*}
hence\begin{align*}
\sum_{k=1}^{\infty}\frac{q^{k}e^{-k\tau\pi w}}{k\left(1-q^{k}\right)} & =S-w+\frac{\pi}{6\tau}+\left(w+\frac{1}{2}\right)\log\left(1-\exp(-\pi\tau w)\right)+\frac{\pi\tau}{12\left(\exp(\tau\pi w)-1\right)}+\mathcal{O}\left(\tau\right)\end{align*}
as $\tau\to0^{+}$. From \cite{Andrews}\begin{align*}
\log\Gamma(w) & =\left(w-\frac{1}{2}\right)\log w-w+\frac{\log(2\pi)}{2}+\int_{0}^{\infty}\left(\frac{1}{2}-\frac{1}{t}+\frac{1}{e^{t}-1}\right)\frac{e^{-tw}}{t}dt\end{align*}
to obtain\begin{align*}
 & \int_{0}^{\infty}\left(\frac{1}{2}-\frac{1}{t}-\frac{t}{12}+\frac{1}{e^{t}-1}\right)\frac{e^{-tw}}{t}dt\\
 & =\log\Gamma(w)-\left(w-\frac{1}{2}\right)\log w+w-\frac{\log(2\pi)}{2}-\frac{1}{12w}\end{align*}
for $\Re(w)>0$. Write\begin{align*}
I & =\int_{0}^{\infty}\left(\frac{1}{2}-\frac{1}{t}-\frac{t}{12}+\frac{1}{e^{t}-1}\right)\frac{e^{-tw}}{t}dt\\
 & =\sum_{k=1}^{\infty}\int_{(k-1)\pi\tau}^{k\pi\tau}\left(\frac{1}{2}-\frac{1}{t}-\frac{t}{12}+\frac{1}{e^{t}-1}\right)\frac{e^{-tw}}{t}dt\end{align*}
and\[
f(t)=\left(\frac{1}{2}-\frac{1}{t}-\frac{t}{12}+\frac{1}{e^{t}-1}\right)\frac{e^{-tw}}{t},\]
then \[
f'(t)=\mathcal{O}\left(t\right)\]
for $t\to0^{+}$ and \[
f'(t)=\mathcal{O}\left(\exp\left(-t\Re(w)\right)\right)\]
for $t\to+\infty$. Hence, \begin{align*}
S-I & =\sum_{k=1}^{\infty}\int_{(k-1)\pi\tau}^{k\pi\tau}dt\int_{t}^{k\pi\tau}f'(y)dy\\
 & =\sum_{k=1}^{\infty}\int_{(k-1)\pi\tau}^{k\pi\tau}f'(y)\int_{(k-1)\pi\tau}^{y}dtdy\\
 & =\sum_{k=1}^{\infty}\int_{(k-1)\pi\tau}^{k\pi\tau}f'(y)\left(y-(k-1)\pi\tau\right)dy,\end{align*}
thus,\[
\left|S-I\right|\le\pi\tau\int_{0}^{\infty}\left|f'(y)\right|dy\]
and\[
S-I=\mathcal{O}\left(\pi\tau\right)\]
as $\tau\to0^{+}$. Then,\begin{align*}
\sum_{k=1}^{\infty}\frac{q^{k}e^{-k\tau\pi w}}{k\left(1-q^{k}\right)} & =\log\Gamma(w)-\left(w-\frac{1}{2}\right)\log w-\frac{\log(2\pi)}{2}\\
 & +\frac{\pi\tau}{12}\left(\frac{1}{\exp(\tau\pi w)-1}-\frac{1}{\pi\tau w}\right)+\frac{\pi}{6\tau}\\
 & +\left(w+\frac{1}{2}\right)\log\left(1-\exp(-\pi\tau w)\right)+\mathcal{O}\left(\tau\right)\\
 & =\log\Gamma(w)-\left(w-\frac{1}{2}\right)\log w-\frac{\log(2\pi)}{2}\\
 & +\frac{\pi}{6\tau}+\left(w+\frac{1}{2}\right)\log\left(1-\exp(-\pi\tau w)\right)+\mathcal{O}\left(\tau\right)\end{align*}
as $\tau\to0^{+}$. \end{proof}
\begin{thm}
Let $q=\exp\left(-\pi\tau\right)$ with $\tau>0$. Then \begin{align}
\Gamma_{q}\left(w\right) & =\Gamma\left(w\right)\left\{ 1+\mathcal{O}\left(\tau\right)\right\} \label{eq:2.3}\end{align}
 as $\tau\to0^{+}$for $z\notin\mathbb{N}\cup\left\{ 0\right\} $.\end{thm}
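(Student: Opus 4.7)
The strategy is to combine the defining product for $\Gamma_q(w)$ with Lemma 2 applied twice, and then extend off the right half-plane using the $q$-Gamma recurrence. I would begin by writing
\[
\Gamma_q(w) = \frac{(q;q)_\infty}{(1-q)^{w-1}(q^w;q)_\infty} = \frac{(q;q)_\infty}{(1-q)^{w-1}(1-q^w)(q^{w+1};q)_\infty},
\]
having split off the factor $1-q^w$ from the infinite product. For the denominator, \eqref{eq:2.2} directly supplies the asymptotic of $(q^{w+1};q)_\infty$ in terms of $\Gamma(w)$. For the numerator, I would use $(q;q)_\infty = (1-q)(q^2;q)_\infty$ and apply \eqref{eq:2.2} with $w=1$; since $\Gamma(1)=1$, this collapses into an explicit asymptotic for $(q;q)_\infty$.

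Forming the ratio, the singular exponential factors $\exp(-\pi/(6\tau))$ coming from numerator and denominator cancel exactly, and so do the factors of $\sqrt{2\pi}$. Gathering the remaining powers of $(1-e^{-\pi\tau})$ and $(1-e^{-\pi\tau w})$, the computation should collapse to
\[
\Gamma_q(w) = \Gamma(w)\left(\frac{1-e^{-\pi\tau w}}{w\,(1-e^{-\pi\tau})}\right)^{w-1/2}\{1+\mathcal{O}(\tau)\}.
\]
A first-order Taylor expansion of the two exponentials gives $(1-e^{-\pi\tau w})/[w(1-e^{-\pi\tau})] = 1+\mathcal{O}(\tau)$, and raising this to the fixed power $w-1/2$ preserves the $1+\mathcal{O}(\tau)$ form via $\log$ and $\exp$. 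This establishes \eqref{eq:2.3} for $\Re(w)>0$.

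For $w$ with $\Re(w)\le 0$ and $w$ not a non-positive integer, I would iterate the functional equation
\[
\Gamma_q(w) = \frac{1-q}{1-q^w}\,\Gamma_q(w+1),
\]
$n$ times, choosing $n$ so large that $\Re(w+n)>0$. Each ratio satisfies $(1-q)/(1-q^{w+k}) = (w+k)^{-1}\{1+\mathcal{O}(\tau)\}$ (valid because no $w+k$ vanishes, by the assumption on $w$), and the Euler Gamma function obeys the matching telescoping identity $\Gamma(w) = \Gamma(w+n)\prod_{k=0}^{n-1}(w+k)^{-1}$. Multiplying the already established asymptotic for $\Gamma_q(w+n)$ by these $n$ ratios therefore transfers \eqref{eq:2.3} to $\Gamma_q(w)$ throughout the punctured plane.

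The main difficulty is bookkeeping rather than analytic: one must be sure that the non-analytic pieces $\exp(-\pi/(6\tau))$ and the fractional powers $(1-e^{-\pi\tau w})^{w+1/2}$ cancel cleanly enough to leave only a benign $1+\mathcal{O}(\tau)$ remainder, and that the exponent $w-1/2$ does not amplify the error beyond $\mathcal{O}(\tau)$. A subtler point is that the implicit constant in $\mathcal{O}(\tau)$ depends on $w$; this is harmless for pointwise convergence but would require more care if one sought uniformity in compact subsets avoiding the poles of $\Gamma$.
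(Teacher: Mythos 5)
Your argument for $\Re(w)>0$ is exactly the paper's: split off the factor $1-q^{w}$, apply \eqref{eq:2.2} to $(q^{w+1};q)_{\infty}$ and (via $w=1$) to $(q;q)_{\infty}$, cancel the $\exp\left(-\frac{\pi}{6\tau}\right)$ and $\sqrt{2\pi}$ factors, and reduce to showing that $\left\{ \frac{1-e^{-\pi\tau w}}{w\left(1-e^{-\pi\tau}\right)}\right\} ^{w-1/2}=1+\mathcal{O}(\tau)$; this is precisely the paper's display \eqref{eq:2.4} and its conclusion. Where you genuinely diverge is the extension beyond the right half-plane. The paper uses the Jacobi triple product and the modular transformation of $\theta_{1}$ to derive an asymptotic reflection formula for $\Gamma_{q}(x)\Gamma_{q}(1-x)$, then divides by the already-established asymptotics of $\Gamma_{q}(1-x)$ (legitimate since $\Re(1-x)>0$ there) and invokes $\Gamma(x)\Gamma(1-x)=\pi/\sin\pi x$. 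You instead iterate the functional equation $\Gamma_{q}(w)=\frac{1-q}{1-q^{w}}\Gamma_{q}(w+1)$ finitely many times until $\Re(w+n)>0$, using $\frac{1-q}{1-q^{w+k}}=\frac{1}{w+k}\left\{ 1+\mathcal{O}(\tau)\right\}$ --- valid because $w+k\neq0$ when $w$ avoids the non-positive integers --- together with the matching recurrence $\Gamma(w)=\Gamma(w+n)\prod_{k=0}^{n-1}(w+k)^{-1}$. This is correct and strictly more elementary: it needs no theta functions and no reflection formula, only the recurrence and the half-plane result already proved. What the paper's route buys in exchange is an explicit asymptotic $q$-analogue of the reflection formula (and the accompanying $\theta_{1}$ identities), which has independent interest; your route buys a shorter, self-contained continuation argument. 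Your closing caveats (the $w$-dependence of the implied constant, the harmlessness of the fixed exponent $w-\frac{1}{2}$) are accurate and do not affect the pointwise statement being proved.
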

\begin{proof}
For $\Re(w)>0$, from \eqref{eq:2.2} to get \begin{align*}
\left(q^{w};q\right)_{\infty} & =\left(1-e^{-\tau\pi w}\right)\left(qe^{-\tau\pi w},q\right)_{\infty}=\frac{\sqrt{2\pi}w^{w-1/2}\exp\left(-\frac{\pi}{6\tau}\right)}{\Gamma\left(w\right)\left(1-e^{-\tau\pi w}\right)^{w-1/2}}\left\{ 1+\mathcal{O}\left(\tau\right)\right\} \end{align*}
and \begin{align*}
\left(q;q\right)_{\infty} & =\frac{\sqrt{2\pi}\exp\left(-\frac{\pi}{6\tau}\right)}{\left(1-e^{-\tau\pi}\right)^{1/2}}\left\{ 1+\mathcal{O}\left(\tau\right)\right\} \end{align*}
as $\tau\to0^{+}$. Hence, for $\Re(w)>0$  we have

\begin{align}
\Gamma_{q}\left(w\right) & =\frac{\left(q;q\right)_{\infty}}{\left(1-q\right)^{w-1}\left(q^{w};q\right)_{\infty}}=\Gamma\left(w\right)\left\{ \frac{1-e^{-\pi\tau w}}{w\left(1-e^{-\pi\tau}\right)}\right\} ^{w-\frac{1}{2}}\left\{ 1+\mathcal{O}\left(\tau\right)\right\} \label{eq:2.4}\end{align}
as $\tau\to0^{+}$. We get \eqref{eq:2.3} from \eqref{eq:2.4} and
\[
\left\{ \frac{1-e^{-\pi\tau w}}{w\left(1-e^{-\pi\tau}\right)}\right\} ^{w-\frac{1}{2}}=1+\mathcal{O}\left(\tau\right)\]
for $\Re(w)>0$ as $\tau\to0^{+}$. Recall that \cite{Whittaker}
\[
\theta_{1}(v|t)=2\sum_{k=0}^{\infty}(-1)^{k}p^{(k+1/2)^{2}}\sin(2k+1)\pi v,\]
\[
\theta_{1}(v|t)=2p^{1/4}\sin\pi v(p^{2};p^{2})_{\infty}(p^{2}e^{2\pi iv};p^{2})_{\infty}(p^{2}e^{-2\pi iv};p^{2})_{\infty}\]
and\[
\theta_{1}\left(\frac{v}{t}\mid-\frac{1}{t}\right)=-i\sqrt{\frac{t}{i}}e^{\pi iv^{2}/t}\theta_{1}\left(v\mid t\right),\]
where $p=e^{\pi it},\quad\Im(t)>0$, then,\begin{align*}
\left(q,q^{1+x},q^{1-x};q\right)_{\infty} & =\frac{\exp\left(\frac{\pi\tau}{8}+\frac{\pi\tau x^{2}}{2}\right)\theta_{1}\left(x\vert\frac{2i}{\tau}\right)}{\sqrt{2\tau}\sinh\frac{\pi\tau x}{2}}\end{align*}
 and\[
\left(q;q\right)_{\infty}^{3}=\frac{\sqrt{2}\exp\left(\frac{\pi\tau}{8}\right)\theta'_{1}\left(0\vert\frac{2i}{\tau}\right)}{\pi\tau^{3/2}}\]
for $q=\exp(-\pi\tau)$ and $\tau>0$. Hence for $x\notin\mathbb{Z}$
we have\begin{align*}
\Gamma_{q}\left(1+x\right)\Gamma_{q}\left(1-x\right) & =\frac{\left(q;q\right)_{\infty}^{3}}{\left(q,q^{1+x},q^{1-x};q\right)_{\infty}}=\frac{2\sinh\frac{\pi\tau x}{2}\theta'_{1}\left(0\vert\frac{2i}{\tau}\right)}{\pi\tau\exp\left(\frac{\pi\tau x^{2}}{2}\right)\theta_{1}\left(x\vert\frac{2i}{\tau}\right)}\end{align*}
and\begin{align*}
\Gamma_{q}\left(x\right)\Gamma_{q}(1-x) & =\frac{1-q}{1-q^{x}}\Gamma_{q}(1+x)=\frac{\left(e^{\pi\tau}-1\right)\theta'_{1}\left(0\vert\frac{2i}{\tau}\right)}{\pi\tau\exp\left(\frac{\pi\tau(x^{2}+x+2)}{2}\right)\theta_{1}\left(x\vert\frac{2i}{\tau}\right)}.\end{align*}
From \[
\frac{e^{\pi\tau}-1}{\pi\tau}=1+\mathcal{O}\left(\tau\right),\]

\[
\theta'_{1}\left(0\vert\frac{2i}{\tau}\right)=2\pi\exp\left(-\frac{\pi}{2\tau}\right)\left\{ 1+\mathcal{O}\left(\tau\right)\right\} ,\]
\[
\theta_{1}\left(x\vert\frac{2i}{\tau}\right)=2\sin\pi x\exp\left(-\frac{\pi}{2\tau}\right)\left\{ 1+\mathcal{O}\left(\tau\right)\right\} \]
and\[
\Gamma\left(x\right)\Gamma\left(1-x\right)=\frac{\pi}{\sin\pi x},\quad x\notin\mathbb{Z}\]
to obtain\begin{align*}
\Gamma_{q}\left(x\right) & =\frac{\pi}{\sin\pi x}\frac{1}{\Gamma\left(1-x\right)}\left\{ 1+\mathcal{O}\left(\tau\right)\right\} =\Gamma\left(x\right)\left\{ 1+\mathcal{O}\left(\tau\right)\right\} \end{align*}
for $x<1$ and $x\notin\mathbb{Z}$ as $\tau\to0^{+}$. 
\end{proof}

\thanks{This work is partially supported by Chinese National Natural Science
Foundation grant No.10761002. The thanks for Tom Koornwinder for the
references \cite{Kirillov,Koornwinder1,Koornwinder2}.}

\end{document}